  \newcommand{\id}[1][]{\operatorname{id}_{#1}}
  \newcommand{\cat}[1]{\mathscr{#1}}
  \declaretheorem[style=definition,within=section]{definition}
  \declaretheorem[style=definition,numberlike=definition]{example}
  \declaretheorem[style=plain,numberlike=definition]{corollary}
  \declaretheorem[style=plain,numberlike=definition]{lemma}
  \declaretheorem[style=plain,numberlike=definition]{proposition}
  \declaretheorem[style=plain,numberlike=definition]{theorem}
  \declaretheorem[style=plain,numberlike=definition]{conjecture}
  \declaretheorem[style=plain,numbered=no,name=Theorem]{theorem*}
  \Crefname{corollary}{Corollary}{Corollaries}
  \Crefname{definition}{Definition}{Definitions}
  \Crefname{lemma}{Lemma}{Lemmas}
  \Crefname{proposition}{Proposition}{Propositions}
  \Crefname{remark}{Remark}{Remarks}
  \Crefname{theorem}{Theorem}{Theorems}
  \newlist{axioms}{enumerate}{1}
  \Crefname{axiomsi}{}{}
  \newenvironment{tikzeq*}
  {
    \begingroup
    \begin{equation*}
    \begin{tikzpicture}[baseline=(current bounding box.center)]
  }
  {
    \end{tikzpicture}
    \end{equation*}
    \endgroup
    \ignorespacesafterend
  }
  \tikzset
  {
    diagram/.style=
    {
      matrix of math nodes,
      column sep={4.3em,between origins},
      row sep={4em,between origins},
      text height=1.5ex,
      text depth=.25ex
    },
    over/.style={preaction={draw=white,-,line width=6pt}},
    every to/.style={font=\footnotesize},
    inj/.style={right hook->},
    surj/.style={-{Latex[open]}},
    cof/.style={>->},
    fib/.style={->>},
  }
  \DeclareFontFamily{U}{mathx}{\hyphenchar\font45}
  \DeclareFontShape{U}{mathx}{m}{n}{
    <5> <6> <7> <8> <9> <10>
    <10.95> <12> <14.4> <17.28> <20.74> <24.88>
    mathx10}{}
  \DeclareSymbolFont{mathx}{U}{mathx}{m}{n}
  \DeclareFontFamily{U}{mathb}{\hyphenchar\font45}
  \DeclareFontShape{U}{mathb}{m}{n}{
    <5> <6> <7> <8> <9> <10>
    <10.95> <12> <14.4> <17.28> <20.74> <24.88>
    mathb10}{}
  \DeclareSymbolFont{mathb}{U}{mathb}{m}{n}
  \DeclareMathAccent{\widebar}{0}{mathx}{"73}
  \DeclareMathSymbol{\Rsh}{\mathrel}{mathb}{"E9}
  \DeclareFontFamily{U}{MnSymbolA}{}
  \DeclareFontShape{U}{MnSymbolA}{m}{n}{
    <-6> MnSymbolA5
    <6-7> MnSymbolA6
    <7-8> MnSymbolA7
    <8-9> MnSymbolA8
    <9-10> MnSymbolA9
    <10-12> MnSymbolA10
    <12-> MnSymbolA12}{}
  \DeclareSymbolFont{MnSyA}{U}{MnSymbolA}{m}{n}
  \DeclareMathSymbol{\twoheaddownarrow}{\mathrel}{MnSyA}{27}
  \newcommand{\MSC}[1]{%
    \let\thempfn\relax
    \footnotetext[0]{2020 Mathematics Subject Classification: #1.}
  }
\tikzstyle{vertex}=[circle, draw, minimum size=7pt, inner sep=0pt]
\newcommand{\Cat}{\mathsf{Cat}} 
\newcommand{\Graph}{\mathsf{Graph}} 
\newcommand{\DiGraph}{\mathsf{DiGraph}} 
\newcommand{\Set}{\mathsf{Set}} 
\numberwithin{equation}{section}
\author{Adrien Grenier \and Krzysztof Kapulkin} 
\title{Biclosed monoidal structures on the categories of digraphs and graphs}
\date{\today}
\begin{document}

  \maketitle

  \begin{abstract}
    We show that the categories of directed and undirected reflexive graphs carry exactly two (up to isomorphism) biclosed monoidal structures.
    \MSC{05C76, 18D15 (primary), 05C25 (secondary)}
  \end{abstract}

\section*{Introduction}

The study of different homotopy theories defined within the category of graphs, often termed \emph{discrete homotopy theory}, has seen renewed interest in recent years \cite{barcelo-greene-jarrah-welker:comparison,barcelo-greene-jarrah-welker:vanishing,chih-scull:groupoid,dochtermann-singh,carranza-doherty-kapulkin,carranza-kapulkin:cubical-graphs} thanks to their applications to a wide range of settings, including: topological data analysis \cite{memoli-zhou,kapulkin-kershaw:data-analysis}, graph coloring \cite{dochtermann:hom-complex,babson-kozlov:lovasz}, and network analysis \cite{chowdhury-memoli:persistent-path-homology}.
Amongst the prominent notions, there are: the A-theory \cite{babson-barcelo-longueville-laubenbacher,barcelo-kramer-laubenbacher-weaver,barcelo-laubenbacher}, the $\times$-theory \cite{dochtermann:hom-complex}, and the GLMY-theory \cite{grigoryans}.

The work of Rieser on (\v{C}ech) closure spaces \cite{rieser:closure-spaces}, and subsequent contributions by Bubenik and Milicevic \cite{bubenik-milicevic} show the wealth of (naive) homotopy theories that the category of graphs, viewed as a full subcategory of the category of closure spaces, can support; such a theory arises for every choice of an interval and a graph product.
(The word `naive' here is meant to distinguish the homotopy theories generally considered in combinatorics from the more general ones often considered in abstract homotopy theory \cite{dwyer-hirschhorn-kan-smith}.)
To know that we are not missing any potentially useful homotopy theories, we would like to determine possible choices of the interval and the graph product.
This leads to the motivating question of the present paper: How many choices of the product are there on the category of graphs?

To make this question meaningful, we need to put some restrictions on the notion of product; a natural way of phrasing these conditions would be to ask that this product makes the category of graphs into a biclosed monoidal category.
Loosely speaking, this means that the graph product under consideration is associative (up to graph isomorphism), unital, and that it induces the structure of a graph on the set of graph homomorphisms between two fixed graphs.
With this notion in place, we can now restate our motivating question to ask: how many biclosed monoidal structures are there on the category of graphs?

In the present paper, we show that the categories of directed and undirected reflexive graphs carry exactly two (up to isomorphism) biclosed monoidal structures each, given by the categorical product and the box product.
Kershaw and the second-named author \cite{kapulkin-kershaw:monoidal-graphs} recently gave a proof of this fact for undirected reflexive graphs, using a combinatorially involved argument.
In comparison, our proof is conceptual, inspired by the theory of finite limit sketches \cite{folz-lair-kelly,bourke-gurski:cocategorical-obstruction}.
The theory itself will not be used, as the proof requires only rudimentary category theory, but it was essential in our discovering the proof in the first place.
This leaves the question of determining all possible biclosed monoidal structures on the category of (directed or undirected) graphs that are not necessarily reflexive.
We discuss the limitations of the present methods to tackle this question in the final section of the paper.

\textbf{Acknowledgements.}
We thank Ross Street for suggesting the theory of finite-limit sketches to us and making us aware of the paper \cite{folz-lair-kelly}.
The authors were supported by the Natural Sciences and Engineering Research Council (NSERC) of Canada via the Undergraduate Student Research Award (A.G.) and the Discovery Grant (K.K.).
We thank NSERC for its generosity.

\section{Preliminaries}

In this section, we review the necessary categorical language to phrase and prove our results.
We begin by recasting the theory of directed reflexive graphs, as used in the GLMY-theory, in the categorical language.

\begin{definition}
    A \textit{directed graph} is a set  with a reflexive relation, where $x \rightsquigarrow y$ means there is an edge from $x$ to $y$. A \textit{directed graph homomorphism} is a function between the sets of vertices which preserves the relation. Together, these form the category $\DiGraph$ of directed graphs.
\end{definition}

We analyze the category of graphs by expressing it as a reflective subcategory of the category of functors $\mathbb{D} \rightarrow \Set$ and natural transformations. 

\begin{definition}
    Let $\mathbb{D}$ be the category generated by the diagram

\[\begin{tikzcd}
	V && {V^2} && E
	\arrow["\Delta"{description}, from=1-1, to=1-3]
	\arrow["\ell"{description}, shift right=4, bend right, from=1-1, to=1-5]
	\arrow["q"{description}, shift left=3, from=1-3, to=1-1]
	\arrow["p"{description}, shift right=3, from=1-3, to=1-1]
	\arrow["{\sigma}", from=1-3, to=1-3, loop, in=55, out=125, distance=10mm]
	\arrow["e"{description}, from=1-5, to=1-3]
\end{tikzcd}\]
    subject to the following equations:
    \begin{align*}
        p\Delta &= q\Delta = \text{id}_V    &  p\sigma &= q     &   \sigma \Delta &= \Delta \\
        \Delta &= e\ell                     &  q\sigma &= p     &   \sigma^2 &= \text{id}_{V^2} \\
    \end{align*}
\end{definition}

Explicitly, the morphisms between any pair of objects are presented in the following table:
\begin{center}
\renewcommand{\arraystretch}{1.5}
    \begin{tabular}{cc|ccc} 
         & & & codomain \\
         & & $V$ & $V^2$ & $E$ \\
         \hline
         & $V$ & $\{\text{id}_V\}$ & $\{\Delta\}$ & $\{\ell\}$ \\
         domain & $V^2$ & $\{p, q\}$ & $\{\text{id}_{V^2}, \Delta p, \Delta q, \sigma\}$ & $\{\ell p, \ell q\}$ \\
         & $E$ & $\{pe, qe\}$ & $\{e, \Delta pe, \Delta qe, \sigma e\}$ & $\{\ell pe, \ell qe, \text{id}_E\}$ \\
    \end{tabular}
\end{center}

By consulting this table, we readily see that:

\begin{proposition}
    In $\mathbb{D}$:
    \begin{enumerate}
        \item $e$ is monic; and
        \item $V^2$ is the product $V \times V$ with projections $p,q$. \qed
    \end{enumerate}
\end{proposition}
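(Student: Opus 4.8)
The plan is to reduce both claims to finite verifications over the three objects of $\mathbb{D}$, reading every composite off the table together with the defining relations. Recall that a morphism $e \from E \to V^2$ is monic exactly when, for each object $X$, post-composition with $e$ is an injection $\mathbb{D}(X, E) \to \mathbb{D}(X, V^2)$; and that $(V^2, p, q)$ is the product $V \times V$ exactly when, for each object $X$, the pairing map
\[
\Phi_X \from \mathbb{D}(X, V^2) \to \mathbb{D}(X, V) \times \mathbb{D}(X, V), \qquad h \mapsto (ph, qh),
\]
is a bijection. Since $\ob \mathbb{D} = \{V, V^2, E\}$, it suffices in each case to inspect these three instances.

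For (1), I would run through the three rows indexed by the domain $X$. For $X = V$ there is nothing to prove, since $\mathbb{D}(V, E) = \{\ell\}$ is a singleton. For $X = V^2$, the elements $\ell p, \ell q$ of $\mathbb{D}(V^2, E)$ are carried by $e$ to $e\ell p = \Delta p$ and $e\ell q = \Delta q$ (using $\Delta = e\ell$), which are distinct entries of $\mathbb{D}(V^2, V^2)$. For $X = E$, the elements $\ell p e,\, \ell q e,\, \text{id}_E$ go to $\Delta p e,\, \Delta q e,\, e$, again pairwise distinct in $\mathbb{D}(E, V^2)$. Hence $e$ is injective on every hom-set, so it is monic.

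For (2), I would check that $\Phi_X$ is a bijection for each $X$ by computing $(ph, qh)$ for every $h$ in the relevant hom-set, using $p\Delta = q\Delta = \text{id}_V$, $p\sigma = q$, and $q\sigma = p$. For $X = V$ the only candidate is $h = \Delta$, and $(p\Delta, q\Delta) = (\text{id}_V, \text{id}_V)$ is the unique pair in $\mathbb{D}(V,V)^2$. For $X = V^2$, the four morphisms $\text{id}_{V^2},\, \Delta p,\, \Delta q,\, \sigma$ map to the four distinct pairs $(p,q),\,(p,p),\,(q,q),\,(q,p)$, exhausting $\{p,q\}^2 = \mathbb{D}(V^2, V)^2$. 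For $X = E$, the four morphisms $e,\, \Delta p e,\, \Delta q e,\, \sigma e$ map to $(pe, qe),\,(pe, pe),\,(qe, qe),\,(qe, pe)$, exhausting $\{pe, qe\}^2$. In each case $\Phi_X$ is a bijection, so $(V^2, p, q)$ is the product.

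There is no genuine obstacle here beyond bookkeeping: the content lies in confirming that the stated relations make each $\Phi_X$ simultaneously injective (uniqueness of the induced map) and surjective (existence). The one point that demands care is to remember that the universal property must be tested against \emph{every} object of $\mathbb{D}$, not only against $V$; indeed the number of morphisms out of $V^2$ and out of $E$ recorded in the table is exactly what is needed — any extra morphism would break injectivity of $\Phi_X$, and any missing one would break surjectivity — so the table already encodes precisely the product structure.
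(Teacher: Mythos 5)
Your proof is correct and follows exactly the route the paper intends: the paper disposes of this proposition with ``by consulting this table, we readily see that'' and a \qed, leaving precisely the hom-set-by-hom-set verification you carried out (injectivity of post-composition with $e$, and bijectivity of the pairing map $h \mapsto (ph, qh)$, checked against all three objects). You have simply made explicit the finite bookkeeping the paper leaves to the reader, and your computations all check out.
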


\begin{corollary}\label{cor:digraph-as-fin-limit-pres-functors}
    $\DiGraph$ is equivalent to the full subcategory of finite limit preserving functors from $\mathbb{D}$ to $\Set$. \qed
\end{corollary}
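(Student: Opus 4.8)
The plan is to exhibit the equivalence explicitly by constructing a comparison functor $\Phi$ from the finite-limit-preserving functors $F\colon\mathbb{D}\to\Set$ to $\DiGraph$ and showing it is fully faithful and essentially surjective. To define $\Phi$, I send such an $F$ to the directed graph whose vertex set is $F(V)$ and whose relation is $\image F(e)$, regarded as a subset of $F(V)\times F(V)$ via the canonical isomorphism $F(V^2)\cong F(V)\times F(V)$. This is well defined: by the Proposition, $V^2$ is the product $V\times V$ in $\mathbb{D}$, so $F$ preserves it, and since $e$ is monic, $F(e)$ is an injection (a finite-limit-preserving functor preserves monomorphisms, monic-ness being the assertion that a certain square is a pullback). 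The relation is reflexive because $\Delta=e\ell$ forces the diagonal $F(\Delta)$ to factor through $\image F(e)$. On a natural transformation $\alpha\colon F\to G$ I set $\Phi(\alpha)=\alpha_V$, which is a graph homomorphism by naturality with respect to $e$, $p$, and $q$.

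Next I would prove $\Phi$ is fully faithful. The crucial observation is that a natural transformation $\alpha\colon F\to G$ is completely determined by its component $\alpha_V$: naturality with respect to $p$ and $q$ together with the product structure forces $\alpha_{V^2}=\alpha_V\times\alpha_V$, and then naturality with respect to $e$ reads $G(e)\circ\alpha_E=\alpha_{V^2}\circ F(e)$, which pins down $\alpha_E$ uniquely because $G(e)$ is monic. This yields faithfulness. For fullness, given a graph homomorphism $f\colon\Phi(F)\to\Phi(G)$, that is, a function $f\colon F(V)\to G(V)$ with $(f\times f)(\image F(e))\subseteq\image G(e)$, I set $\alpha_V=f$, $\alpha_{V^2}=f\times f$, and let $\alpha_E$ be the restriction of $f\times f$ allowed by the containment; a short check over the six generators $p,q,\Delta,\sigma,e,\ell$ verifies naturality, using $e\ell=\Delta$ for the $\ell$-square and the fact that $\sigma$ is the swap for the $\sigma$-square.

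For essential surjectivity, given a directed graph $(X,R)$ I would define $\Psi(X,R)=F_X$ by $F_X(V)=X$, $F_X(V^2)=X\times X$, $F_X(E)=R$, with $F_X(p),F_X(q)$ the projections, $F_X(\Delta)$ the diagonal, $F_X(\sigma)$ the coordinate swap, $F_X(e)\colon R\hookrightarrow X\times X$ the inclusion, and $F_X(\ell)\colon x\mapsto(x,x)$ (well defined by reflexivity). One checks directly that all defining equations of $\mathbb{D}$ hold, so $F_X$ is a functor, and that $\Phi(F_X)=(X,R)$ on the nose. The cleanest way to see that $F_X$ preserves finite limits is to identify it with $\operatorname{Hom}_{\DiGraph}(G_{(-)},X)$, where $G\colon\mathbb{D}^{\op}\to\DiGraph$ sends $V$ to a single looped vertex, $V^2$ to two disjoint looped vertices, and $E$ to the walking directed edge; since $\operatorname{Hom}_{\DiGraph}(-,X)$ turns colimits into limits, and $G$ carries the product cone on $V^2$ to the coproduct $G_V\sqcup G_V$ and carries $e$ to a vertex-surjective, hence epic, map, the product and the monomorphism are preserved. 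Together with full faithfulness this gives the equivalence, with $\Phi\Psi=\id$ and $\Psi\Phi\cong\id$, the latter using that every such $F$ is recovered up to isomorphism from $F(V)$, $F(E)$, and $F(e)$.

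The step I expect to be the main obstacle is verifying that $F_X$ genuinely preserves \emph{all} finite limits existing in $\mathbb{D}$, rather than merely the product $V\times V$ and the monomorphism $e$; this amounts to understanding the full finite-limit structure of $\mathbb{D}$. The representable description $F_X\cong\operatorname{Hom}_{\DiGraph}(G_{(-)},X)$ is what makes this tractable, since it reduces limit-preservation in $\Set$ to colimit-recognition in $\DiGraph$, where the coproduct $G_V\sqcup G_V$ and the epimorphism $G_e$ are easy to identify.
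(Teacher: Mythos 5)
Your overall route---sending a finite-limit-preserving $F$ to the graph $(F(V),\image F(e))$, getting full faithfulness from the fact that a natural transformation is pinned down by its $V$-component, and getting essential surjectivity by building $F_X$ from a graph---is exactly the argument the paper leaves implicit (the paper prints no proof, treating the corollary as immediate from the Proposition that $e$ is monic and $V^2 = V\times V$), and your well-definedness and full-faithfulness steps are correct and complete. The genuine gap is the one you flag yourself and then do not close: essential surjectivity requires $F_X$ to preserve \emph{every} finite limit that exists in $\mathbb{D}$, whereas you only verify preservation of the product cone on $V^2$ and of the pullback square expressing that $e$ is monic. These do not exhaust the finite limits of $\mathbb{D}$. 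Consulting the hom-table: the equalizer of $\id[{V^2}]$ and $\sigma$ exists and is $\Delta \from V \to V^2$; the equalizer of $\id[E]$ and $\ell p e$ exists and is $\ell \from V \to E$; and the pullback of $e$ along $\Delta$ exists and is $V$, with legs $\id[V]$ and $\ell$. None of these is covered by your two checks, so what you have actually proved is that $\DiGraph$ is equivalent to the category of functors preserving the product and the monomorphism---not yet the corollary as literally stated.

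The gap is fillable, but your representable reduction does not by itself fill it: to conclude that $\operatorname{Hom}_{\DiGraph}(G(-),X)$ preserves a given limit cone of $\mathbb{D}$, you must show that $G$ sends that cone to a colimit cocone in $\DiGraph$, so you still need to know which finite limits $\mathbb{D}$ has---which is precisely the ``main obstacle'' you name. To finish, carry out the finite check (feasible, since $\mathbb{D}$ has three objects and twenty morphisms): classify the existing finite limits from the hom-table and verify each is preserved by $F_X$. For instance, product preservation forces $F_X(\sigma)$ to be the coordinate swap, whose equalizer with the identity is the diagonal, i.e.\ the image of $F_X(\Delta)$; the equalizer of $\id[E]$ and $\ell pe$ is preserved because $F_X(\ell pe)$ sends $(x,y)$ to $(x,x)$; and the pullback of $e$ along $\Delta$ is preserved precisely because the relation of $F_X$ is reflexive (which in turn is forced by $\Delta = e\ell$). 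It is also worth recording the cleaner intermediate statement your construction suggests: any functor preserving the product and the monicity of $e$ has all its remaining structure maps forced ($F(\sigma)$ the swap, $F(\Delta)$ the diagonal, $F(\ell)$ determined by $F(e)F(\ell)=F(\Delta)$ and injectivity of $F(e)$), hence is isomorphic to some $F_X$; this reduces the whole question to the single verification that each $F_X$ preserves the existing finite limits. Some version of that verification is an unavoidable part of a complete proof.
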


In light of \cref{cor:digraph-as-fin-limit-pres-functors}, we view the presheaf category $\Set^\mathbb{D}$ of all functors as the  category of marked directed multigraphs.
Explicitly, an object $X \in \Set^\mathbb{D}$ consists of a set $X_V$ of vertices, a multiset $X_{V^2}$ of ordered pairs of vertices and a multiset $X_E$ of directed edges. 
Since finite limit preserving functors form a right class in a factorization system on $\Set^\mathbb{D}$, we conclude that $\DiGraph$ is a reflective subcategory of $\Set ^\mathbb{D}$.

The theory of undirected graphs can be developed similarly.

\begin{definition}
    An \textit{undirected graph} is a set with a reflexive, symmetric relation. An \textit{undirected graph homomorphism} is a function between the sets of vertices which preserves the relation. Together, these form the category $\Graph$ of undirected graphs
\end{definition}

\begin{definition}
    Let $\mathbb{G}$ be the category generated by the diagram
\[\begin{tikzcd}
	V && {V^2} && E
	\arrow["\Delta"{description}, from=1-1, to=1-3]
	\arrow["\ell"{description}, shift right=4, bend right, from=1-1, to=1-5]
	\arrow["q"{description}, shift left=3, from=1-3, to=1-1]
	\arrow["p"{description}, shift right=3, from=1-3, to=1-1]
	\arrow["{\sigma}", from=1-3, to=1-3, loop, in=55, out=125, distance=10mm]
	\arrow["e"{description}, from=1-5, to=1-3]
	\arrow["s", from=1-5, to=1-5, loop, in=55, out=125, distance=10mm]
\end{tikzcd}\]    
subject to the following equations:
    \begin{align*}
        p\Delta &= q\Delta = \text{id}_V    & p\sigma &= q  & \sigma \Delta &= \Delta       & es &= \sigma e \\
        \Delta &= e\ell                     & q\sigma &= p  & \sigma^2 &= \text{id}_{V^2}   & s\ell &= \ell \\
                &                           &         &     &           &                   & s^2 &= \text{id}_E
    \end{align*}
\end{definition}

All previous results hold for the case of undirected graphs and $\mathbb{G}$.
We record them, but omit their proofs for brevity.

\begin{proposition} \leavevmode
    \begin{enumerate}
        \item In $\mathbb{G}$: $e$ is monic and $V^2$ is the product $V \times V$ with projections $p$ and $q$.
        \item $\Graph$ is equivalent to the full subcategory of finite limit preserving functors from $\mathbb{G}$ to $\Set$.
        \item $\Graph$ is a reflective subcategory of $\Set ^{\mathbb{G}}$. \qed
    \end{enumerate}
\end{proposition}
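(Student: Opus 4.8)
The plan is to mirror the development already carried out for $\DiGraph$ and $\mathbb{D}$, the only genuinely new ingredient being the extra endomorphism $s$ of $E$, which will encode the symmetry of the edge relation.

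For part (1), I would first reconstruct the hom-set table of $\mathbb{G}$ in the same way the table for $\mathbb{D}$ was obtained. The point to verify is that the new generator $s$, together with the relations $es = \sigma e$, $s\ell = \ell$, and $s^2 = \text{id}_E$, introduces exactly one new morphism, namely $s$ itself in $\mathbb{G}(E,E)$. Reading a word in the generators, an occurrence of $s$ can only sit in an $E \to E$ slot: to its left it meets either $\ell$ (and $s\ell = \ell$ collapses it) or another $s$ (and $s^2 = \text{id}_E$), while to its right it meets $e$, where $es = \sigma e$ trades it for a $\sigma$ already handled by the $\mathbb{D}$-relations; the residual cases $pes = p\sigma e = qe$, $qes = pe$, $\ell pes = \ell qe$, and so on merely permute existing maps. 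Hence the table for $\mathbb{G}$ coincides with that for $\mathbb{D}$ except that $\mathbb{G}(E,E) = \{\text{id}_E, s, \ell pe, \ell qe\}$. Since every hom-set with domain $V^2$ or codomain $V$ is unchanged, the verification that $V^2$ is the product $V \times V$ with projections $p,q$ is verbatim the directed one; and for $e$ monic I would check injectivity of post-composition with $e$ on each relevant hom-set, the only case involving $s$ being $\mathbb{G}(E,E)$, where $e \cdot \text{id}_E = e$, $es = \sigma e$, $e\ell pe = \Delta pe$, $e\ell qe = \Delta qe$ are the four distinct elements of $\mathbb{G}(E,V^2)$.

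For part (2), I would argue exactly as for \cref{cor:digraph-as-fin-limit-pres-functors}. A finite-limit-preserving functor $D \from \mathbb{G} \to \Set$ sends $V^2$ to $D_V \times D_V$ and $e$ to a monomorphism, so $D_E$ is (the carrier of) a relation $R \subseteq D_V \times D_V$; reflexivity follows from $\Delta = e\ell$ as before. The new clause is symmetry: the equations $p\sigma = q$, $q\sigma = p$ force $D\sigma$ to be the swap on $D_V \times D_V$, and then $es = \sigma e$ gives $De \circ Ds = D\sigma \circ De$, so that $(x,y) \in R$ implies $(y,x) = D\sigma(x,y) \in R$. Conversely a symmetric reflexive relation determines such a functor, with $Ds$ the unique map determined by $e$ being monic, and functoriality together with fully-faithfulness on morphisms is checked as in the directed case. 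For part (3), I would invoke the same orthogonality argument: the finite-limit-preserving functors form the right class of an (orthogonal) factorization system on $\Set^{\mathbb{G}}$, so the inclusion $\Graph \ito \Set^{\mathbb{G}}$ admits a left adjoint. Alternatively one writes down the reflector directly by $LX_V = X_V$, $LX_{V^2} = X_V \times X_V$, and $LX_E$ the smallest symmetric reflexive relation containing the pairs $(pe\alpha, qe\alpha)$ for $\alpha \in X_E$.

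The main obstacle is purely bookkeeping, concentrated in part (1): one must confirm that the rewriting rules for $s$ really do close off each hom-set and produce no morphism other than $s$. Once the table is secured, parts (2) and (3) are faithful transcriptions of the directed arguments, the sole addition being the symmetry clause supplied by $s$.
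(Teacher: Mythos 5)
Your proposal is correct and follows essentially the same route as the paper, which omits the proof precisely because it is the directed-case development transcribed to $\mathbb{G}$: the hom-set table argument for (1), the relation-theoretic reading of finite-limit-preserving functors for (2), and the factorization-system (or explicit reflector) argument for (3). Your bookkeeping for $s$ — that it contributes only the single new morphism $s \in \mathbb{G}(E,E)$, with $es = \sigma e$, $s\ell = \ell$, $s^2 = \id[E]$ collapsing every other word, and that it is exactly what encodes symmetry of the relation — is the (correct) content the paper leaves implicit.
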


We write $J_n$ for the directed graph with $n+1$ vertices where the $i$th vertex has an edge to the $(i +1)$st vertex for $i<n$. 
Similarly, we write $I_n$ for the undirected graph with $n+1$ vertices connected in the same way but with undirected edges. 

We now quickly review biclosed monoidal categories.

\begin{definition}
    A \textit{monoidal category} consists of a category $\cat{C}$, along with a bifunctor $\otimes \colon \cat{C} \times \cat{C} \rightarrow \cat{C}$ (the tensor product), an object $I \in \cat{C}$ (the unit) and three natural isomorphisms $\alpha, \lambda, \varrho$ with components:
    \[\alpha_{X, Y, Z} \colon (X \otimes Y) \otimes Z \cong X \otimes (Y \otimes z), \qquad \lambda_X \colon I \otimes X \cong X, \text{ and} \qquad \varrho_x \colon X \otimes I \cong X; \]
    making the following diagrams commute for all $X, Y, Z, W \in \cat{C}$: 
\[
\noindent
\begin{tikzpicture}[
    baseline=(current bounding box.center),
    scale=2,
    every node/.style={inner sep=2pt},
    arrow/.style={->,>=latex,shorten >=4pt,shorten <=4pt},
    lab/.style={midway, fill=white, inner sep=1pt}
  ]
  \coordinate (A) at (0,2);    
  \coordinate (B) at (2,1);    
  \coordinate (C) at (2,0);    
  \coordinate (D) at (-2,0);   
  \coordinate (E) at (-2,1);   

  \node (NA) at (A) {$((X\otimes Y)\otimes Z)\otimes W$};
  \node (NB) at (B) {$(X\otimes Y)\otimes (Z\otimes W)$};
  \node (NC) at (C) {$X\otimes\bigl(Y\otimes (Z\otimes W)\bigr)$};
  \node (ND) at (D) {$X\otimes\bigl((Y\otimes Z)\otimes W\bigr)$};
  \node (NE) at (E) {$(X\otimes (Y\otimes Z))\otimes W$};

  \draw[arrow] (NA) -- node[lab, above=2pt] {$\alpha_{X\otimes Y,Z,W}$} (NB);
  \draw[arrow] (NB) -- node[lab, right=2pt] {$\alpha_{X,Y,Z\otimes W}$} (NC);
  \draw[arrow] (NA) -- node[lab, left=2pt] {$\alpha_{X,Y,Z}\otimes \id[W]$} (NE);
  \draw[arrow] (NE) -- node[lab, left=2pt] {$\alpha_{X,Y\otimes Z,W}$} (ND);
  \draw[arrow] (ND) -- node[lab, below=2pt] {$\id[X]\otimes \alpha_{Y,Z,W}$} (NC);
\end{tikzpicture}
\qquad
\begin{tikzpicture}[
    baseline=(current bounding box.center),
    every node/.style={inner sep=2pt},
    arrow/.style={->,>=latex,shorten >=4pt,shorten <=4pt},
    lab/.style={midway, fill=white, inner sep=1pt}
  ]
  \coordinate (A) at (0,1.5);   
  \coordinate (B) at (3,1.5);   
  \coordinate (C) at (1.5,0);   

  \node (NA) at (A) {$(X\otimes I)\otimes Y$};
  \node (NB) at (B) {$X\otimes (I\otimes Y)$};
  \node (NC) at (C) {$X\otimes Y$};

  \draw[arrow] (NA) -- node[lab, above=2pt] {$\alpha_{X,I,Y}$} (NB);
  \draw[arrow] (NA) -- node[lab, left=2pt] {$\varrho_X\otimes \id[Y]$} (NC);
  \draw[arrow] (NB) -- node[lab, right=2pt] {$\id[X]\otimes \lambda_Y$} (NC);
\end{tikzpicture}
\]
\end{definition}

\begin{definition}
A monoidal category $(\cat{C}, \otimes, I)$ is \emph{left closed} (respectively \emph{right closed}) if for every object $X \in \cat{C}$, the functor $ X \otimes - $ has a right adjoint (respectively $ - \otimes X$ has a right adjoint).
A \emph{biclosed} monoidal category is a monoidal category that is both left closed and right closed.
\end{definition}

Examples of biclosed monoidal categories include $\Set$ equipped with the cartesian product, $\mathsf{Ab}$ equipped with the tensor product, and $\Cat$ equipped with the cartesian product.
There are also two biclosed monoidal structures on the category of (di)graphs which we now define.

\begin{definition}
    Let $G, H$ be graphs (either both directed or undirected).
    \begin{itemize}
        \item The \emph{box product} $G \mathbin{\square} H$ has vertex set $G(V) \times H(V)$ and $(g,h) \sim (g',h')$ (there is an edge from $(g,h)$ to $(g',h')$) if $g = g'$ and $h \sim h'$, or $g \sim g'$ and $h = h'$.
        \item the \emph{categorical product} $G \boxtimes H$ has vertex set $G(V) \times H(V)$ and $(g,h) \sim (g',h')$ if $g = g'$ and $h \sim h'$, or $g \sim g'$ and $h = h'$, or $g \sim g'$ and $h \sim h'$.
    \end{itemize}
\end{definition}

It is easy to check that both the box product and the categorical product define a biclosed monoidal structure on both $\DiGraph$ and $\Graph$.




\begin{lemma}[cf.~{\cite[Cor.~3.4]{kapulkin-kershaw:monoidal-graphs}}] \label{lem:unit} \leavevmode
    \begin{enumerate}
        \item A biclosed monoidal structure on $\DiGraph$ must have unit $J_0$.
        \item A biclosed monoidal structure on $\Graph$ must have unit $I_0$.
    \end{enumerate}
\end{lemma}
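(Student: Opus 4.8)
The plan is to reduce part (1) to showing that the monoidal unit $I$ has a single vertex: since reflexivity forces the unique one-vertex reflexive directed graph to be $J_0$, this identifies $I\cong J_0$, and part (2) is verbatim the same argument with $I_0$ in place of $J_0$. Throughout I would exploit that in a biclosed structure both $-\otimes X$ and $X\otimes-$ are left adjoints, hence preserve all colimits (in particular the initial object, coproducts and epimorphisms), and that the vertex functor $V=\hom(J_0,-)$ sits between the discrete and codiscrete functors, while the components functor $\pi_0$ is left adjoint to the discrete functor; thus both $V$ and $\pi_0$ preserve colimits.

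First I would record the soft consequences of the unit axioms. Since $-\otimes J_0$ preserves the initial object, $I\neq\emptyset$ (else $I\otimes J_0=\emptyset\neq J_0$). The terminal map $I\to J_0$ is vertex-surjective, hence epic, so $I\otimes J_0\to J_0\otimes J_0$ is epic; its domain $I\otimes J_0\cong J_0$ is nonempty, and a nonempty graph admits no map to $\emptyset$, so $J_0\otimes J_0$ is a nonempty vertex-quotient of $J_0$, forcing $J_0\otimes J_0\cong J_0$. Decomposing $I\cong\coprod_c I_c$ into connected components and using that $-\otimes J_0$ and $\pi_0$ preserve coproducts gives $1=\pi_0(J_0)=\coprod_c\pi_0(I_c\otimes J_0)$; each nonempty $I_c$ has a vertex $J_0\to I_c$ yielding $J_0\cong J_0\otimes J_0\to I_c\otimes J_0$, so $I_c\otimes J_0\neq\emptyset$. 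Hence exactly one summand occurs and $I$ is connected. This is the easy part and serves to illustrate the method; the real content is the vertex count.

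For the vertex count I would use that every directed graph is canonically the colimit of its diagram of vertices and edges, that is, a colimit of copies of $J_0$ and $J_1$ glued along the two endpoint inclusions $J_0\rightrightarrows J_1$. By density of $\{J_0,J_1\}$, any functor cocontinuous in each variable is the left Kan extension of its restriction to $\{J_0,J_1\}\times\{J_0,J_1\}$, so $\otimes$ is determined by the four objects $J_i\otimes J_j$ together with the induced action on the endpoint maps. Writing $I=\colim_k P_k$ with $P_k\in\{J_0,J_1\}$ and applying the colimit-preserving $V$ to $I\otimes J_0\cong J_0$ gives $\colim_k V(P_k\otimes J_0)=1$; the goal is to upgrade this to $V(I)=\colim_k V(P_k)=1$. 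Equivalently, I must establish the natural isomorphism $V(-\otimes J_0)\cong V(-)$, which by the generator description reduces to the single rigidity statement that $-\otimes J_0$ preserves the two vertices of $J_1$, i.e.\ $V(J_1\otimes J_0)=2$; granting it, $V(I)=V(I\otimes J_0)=V(J_0)=1$, so $I\cong J_0$.

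The main obstacle is precisely this rigidity. The retract produced by a chosen vertex of $I$ only shows that $J_1\otimes J_0$ is a retract of $J_1$, hence $\cong J_0$ or $\cong J_1$, and the collapse $J_1\otimes J_0\cong J_0$ is self-consistent for the functor $-\otimes J_0$ in isolation (the constant diagram it produces still has colimit $J_0$ since $I$ is connected). To exclude it I would feed the remaining unit isomorphisms $I\otimes J_1\cong J_1$ and $J_1\otimes I\cong J_1$, together with the associator, into the finite generator data: these relate the still-unknown values $J_0\otimes J_1$, $J_1\otimes J_1$ and the endpoint actions, and the resulting finite system — exactly the promonoidal compatibility that the finite-limit-sketch viewpoint organizes — is solvable only when tensoring preserves both vertices of $J_1$. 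Once this rigidity is in hand the vertex count, and hence $I\cong J_0$, follows as above.
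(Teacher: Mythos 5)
Your preliminary reductions are fine: the nonemptiness of the unit $I$, the identification $J_0\otimes J_0\cong J_0$, the connectedness of $I$ via $\pi_0$, and the reduction of the vertex count to the rigidity statement $V(J_1\otimes J_0)=2$ are all correct. The problem is that you never prove the rigidity, and this is not a minor loose end but the entire content of the lemma. Your final paragraph ("the resulting finite system \dots is solvable only when tensoring preserves both vertices of $J_1$") asserts the conclusion rather than deriving it. Moreover, your own observation shows why the facts you did establish cannot suffice: the collapse $-\otimes J_0\cong \mathrm{disc}\circ\pi_0$ (discrete graph on the set of components) is a colimit-preserving functor that even has a right adjoint, namely $Y\mapsto\mathrm{disc}(V(Y))$, and it sends every nonempty connected digraph $I$ to $J_0$. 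So every nonempty connected digraph --- for instance $I=J_1$, with $J_1\otimes J_0\cong J_0\cong J_0\otimes J_1$ and $J_1\otimes J_1\cong J_1$ --- passes all the tests you have actually carried out. Excluding these candidates genuinely requires the coherence and naturality of the unit isomorphisms across the whole monoidal structure, which is exactly the step your sketch defers.

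The paper takes a different and much shorter route: since $\DiGraph$ is well-pointed, \cite[Prop.~1.7]{kapulkin-kershaw:monoidal-graphs} says the unit of any biclosed monoidal structure is a subobject of the terminal object, hence $\varnothing$ or $J_0$; and $\varnothing$ is excluded because $\otimes$ preserves the initial object in each variable (this last step coincides with your nonemptiness observation). If you want a self-contained proof, the missing ingredient can be supplied by the argument underlying that citation, and it replaces your density apparatus entirely: given two vertices $x,y\from J_0\to I$, bifunctoriality gives $x\otimes y=(\id[I]\otimes y)\circ(x\otimes\id[{J_0}])=(x\otimes\id[I])\circ(\id[{J_0}]\otimes y)$; composing with $\lambda_I=\varrho_I$ and using naturality of $\lambda$ and $\varrho$ yields $y\circ u=x\circ u$, where $u\from J_0\otimes J_0\to J_0$ is the unique map to the terminal object. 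Since $J_0\otimes J_0$ is nonempty (you proved it is $J_0$), the map $u$ is vertex-surjective, hence epic, so $x=y$. Thus $V(I)\le 1$, and with nonemptiness and reflexivity this forces $I\cong J_0$.
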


\begin{proof}
We only prove (1), as the proof of (2) is identical.
For (1), suppose that $(\DiGraph, \otimes, J)$ is a biclosed monoidal category.
Since $\DiGraph$ is well-pointed, by \cite[Prop.~1.7]{kapulkin-kershaw:monoidal-graphs}, the unit is a subobject of the terminal object $J_0$; that is, $J$ is either $\varnothing$ or $J_0$.
The monoidal structure is biclosed, so $\otimes$ preserves the initial object in each variable. 
If $J = \varnothing$, then for every digraph $G$, we have $G = G \otimes \varnothing = \varnothing$.
Hence, $J$ must be $J_0$.
\end{proof}


\begin{lemma} \label{lem:determined-by-square} \leavevmode
    \begin{enumerate}
        \item Any biclosed monoidal structure $(\DiGraph, \otimes, J_0)$ is completely determined by $J_1 \otimes J_1$.
        \item Any biclosed monoidal structure $(\Graph, \otimes, I_0)$ is completely determined by $I_1 \otimes I_1$.
    \end{enumerate}
\end{lemma}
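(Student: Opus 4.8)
The plan is to exploit two facts: a biclosed tensor preserves colimits in each variable, and the two objects $J_0$ and $J_1$ generate $\DiGraph$ under colimits. I prove (1); statement (2) is proved identically, replacing $J_0, J_1$ by $I_0, I_1$ (the extra edge-symmetry $s$ of $I_1$ is carried along but changes nothing). \emph{Step 1 (a canonical presentation).} First I would record that the full subcategory $\mathcal{A} \subseteq \DiGraph$ on the objects $J_0$ and $J_1$ is dense, i.e.\ every digraph $G$ is the colimit of the canonical diagram $\mathcal{A}\slice G \to \DiGraph$. Concretely, maps $J_0 \to G$ are the vertices of $G$ and maps $J_1 \to G$ are its edges (including degenerate loops), while the nonidentity morphisms of $\mathcal{A}$ — the two vertex inclusions $v_0, v_1 \colon J_0 \to J_1$ and the collapse $! \colon J_1 \to J_0$ — encode incidence of an edge with its endpoints. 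Because a digraph map is determined by its action on vertices and is merely required to preserve edges, this canonical cocone is a colimit; this is the only place the description of $\DiGraph$ from \cref{cor:digraph-as-fin-limit-pres-functors} is used.

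\emph{Step 2 (cocontinuity).} Since $(\DiGraph, \otimes, J_0)$ is biclosed, each functor $G \otimes -$ and $- \otimes H$ has a right adjoint and hence preserves all colimits. Applying this to the presentations of $G$ and of $H$ from Step 1 gives
\[
  G \otimes H \iso \colim_{(a \to G,\; b \to H)}\, (a \otimes b),
\]
the colimit of the diagram $(\mathcal{A}\slice G)\times(\mathcal{A}\slice H) \to \DiGraph$ sending $(a\to G, b\to H)$ to $a\otimes b$, with transition maps the images under $\otimes$ of the morphisms of $\mathcal{A}$. Equivalently, $\otimes$ is the left Kan extension $\Lan$ of its own restriction $T_0 = \otimes|_{\mathcal{A}\times\mathcal{A}}$ along $\mathcal{A}\times\mathcal{A}\hookrightarrow\DiGraph\times\DiGraph$, so the whole bifunctor is determined by $T_0$.

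\emph{Step 3 (reduction to $J_1\otimes J_1$).} It remains to pin down $T_0$. On objects, $T_0(J_0,J_0)\iso J_0$ and $T_0(J_0,J_1)\iso J_1\iso T_0(J_1,J_0)$ by \cref{lem:unit} and the unit isomorphisms, leaving only $T_0(J_1,J_1)=J_1\otimes J_1$. On morphisms, bifunctoriality reduces everything to the maps $v_i\otimes\id[J_1]$ and $\id[J_1]\otimes v_j$ (four maps $J_1\to J_1\otimes J_1$), together with $!\otimes\id[J_1]$ and $\id[J_1]\otimes\,!$. Here the defining feature of digraphs enters decisively: $J_1\otimes J_1$ is again a reflexive relation, so it has at most one edge between any ordered pair of vertices, and hence every digraph map out of it or into it is determined by its action on vertices. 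The vertex-actions of the maps above are governed entirely by the unitors and by the four corner vertices $v_i\otimes v_j \colon J_0\iso J_0\otimes J_0 \to J_1\otimes J_1$, which are canonical. Thus $T_0$, and with it the entire diagram of Step 2, depends only on the object $J_1\otimes J_1$ equipped with these four distinguished corners; feeding this back into the colimit formula shows that two biclosed structures with isomorphic $J_1\otimes J_1$ have isomorphic tensor bifunctors.

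\emph{Main obstacle.} I expect the real work to lie at the two ends. Step 1 deserves an explicit verification rather than an appeal to general presheaf theory, since $\DiGraph$ is only reflective in $\Set^{\mathbb{D}}$ and not itself a presheaf category; one must check directly that the vertex-and-edge cocone is colimiting. At the other end, the phrase ``determined by $J_1\otimes J_1$'' must be read as ``determined by this object together with its canonical corner vertices,'' and the delicate point is confirming that the reconstruction of $\otimes$ as a colimit is natural enough to transport the associativity and unit constraints, so that the determination is of the monoidal structure and not merely of the bare bifunctor. The morphism bookkeeping inside Step 3, by contrast, is rendered routine by the at-most-one-edge property.
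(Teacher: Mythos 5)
Your proposal is correct and takes essentially the same route as the paper: the paper's entire proof is ``density theorem $+$ cocontinuity of a biclosed tensor $+$ the unit axiom,'' and your Steps 1--3 are a fleshed-out version of exactly that argument (the paper obtains density of $\{J_0, J_1\}$ from the presheaf density theorem together with reflectivity of $\DiGraph$ in $\Set^{\mathbb{D}}$, whereas you verify it directly in $\DiGraph$, a minor variation). If anything, your Step 3 morphism bookkeeping and the caveats you flag about corner vertices and transporting the coherence data record more detail than the paper's two-sentence proof does.
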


\begin{proof}
By the density theorem, the monoidal product is determined by its action on $J_0$ and $J_1$ (respectively $I_0$ and $I_1$).
Since $J_0$ (respectively $I_0$) is the monoidal unit, the result follows.
\end{proof}

\section{Directed graphs}

In this section, we prove that the category of directed reflexive graphs carries exactly two biclosed monoidal structures.
We begin with a piece of notation and a crucial technical lemma.

Let $C_{Sq}$ be the 'commutative square' graph:
\[\begin{tikzcd}
	\bullet && \bullet \\
	\\
	\bullet && \bullet
	\arrow[from=1-1, to=1-3]
	\arrow[from=1-1, to=3-1]
	\arrow[from=1-3, to=3-3]
	\arrow[from=3-1, to=3-3]
\end{tikzcd}\]

\begin{lemma} \label{lem:J1-J1-four-vertices}
    If $(\DiGraph, \otimes, J_0)$ is a biclosed monoidal structure, then $J_1 \otimes J_1$ must have exactly four vertices and contains $C_{Sq}$ as a subgraph.
\end{lemma}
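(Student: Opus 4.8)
The plan is to produce the four corner vertices together with the four boundary edges by ``whiskering'' the two global points of $J_1$ with identities, and then to prove there are no further vertices by a left-adjoint/epimorphism argument. Throughout, write $\iota_0,\iota_1\colon J_0\to J_1$ for the two vertices of $J_1$ and $\pi\colon J_1\to J_0$ for the unique map to the terminal object $J_0$ (which is also the unit, by \cref{lem:unit}); note that $\pi\iota_0=\pi\iota_1=\mathrm{id}_{J_0}$ since $J_0$ is terminal.

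First I would exhibit at least four vertices and the subgraph $C_{Sq}$. Set $v_{ij}:=\iota_i\otimes\iota_j\colon J_0\cong J_0\otimes J_0\to J_1\otimes J_1$ for $i,j\in\{0,1\}$. The two ``projections'' $p_1:=\mathrm{id}_{J_1}\otimes\pi\colon J_1\otimes J_1\to J_1\otimes J_0\cong J_1$ and $p_2:=\pi\otimes\mathrm{id}_{J_1}\colon J_1\otimes J_1\to J_0\otimes J_1\cong J_1$ satisfy $p_1 v_{ij}=\iota_i$ and $p_2 v_{ij}=\iota_j$; since $\iota_0\neq\iota_1$, the four vertices $v_{ij}$ are pairwise distinct. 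The four whiskered maps $\iota_0\otimes\mathrm{id}_{J_1}$, $\iota_1\otimes\mathrm{id}_{J_1}$, $\mathrm{id}_{J_1}\otimes\iota_0$, $\mathrm{id}_{J_1}\otimes\iota_1\colon J_1\to J_1\otimes J_1$ are graph homomorphisms, so each carries the edge of $J_1$ to an edge of $J_1\otimes J_1$; these are precisely the four edges $v_{00}\to v_{01}$, $v_{10}\to v_{11}$, $v_{00}\to v_{10}$, $v_{01}\to v_{11}$. As the $v_{ij}$ are distinct, assembling these data yields a vertex-injective homomorphism $C_{Sq}\to J_1\otimes J_1$, which is therefore a monomorphism; hence $C_{Sq}$ is a subgraph.

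The harder half is the upper bound: that $J_1\otimes J_1$ has \emph{no} vertices beyond the $v_{ij}$. Here I would use that morphisms in $\DiGraph$ are determined by their action on vertices, so any vertex-surjective homomorphism is an epimorphism; in particular the vertex inclusion $\iota_0\sqcup\iota_1\colon J_0\sqcup J_0\to J_1$, being bijective on vertices, is epic. Since the monoidal structure is biclosed, $-\otimes J_1$ is a left adjoint and hence preserves epimorphisms, so $h:=(\iota_0\sqcup\iota_1)\otimes\mathrm{id}_{J_1}\colon (J_0\sqcup J_0)\otimes J_1\to J_1\otimes J_1$ is epic. The vertex functor $V=\DiGraph(J_0,-)$ is itself a left adjoint (its right adjoint is the codiscrete-graph functor), so it too preserves epimorphisms; thus $V(h)$ is a surjection of sets. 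Because $\otimes$ preserves coproducts, the domain is $(J_0\sqcup J_0)\otimes J_1\cong J_1\sqcup J_1$, which has exactly four vertices, and $V(h)$ sends them onto $\{v_{00},v_{01},v_{10},v_{11}\}$. Surjectivity then forces every vertex of $J_1\otimes J_1$ to be one of the $v_{ij}$, giving exactly four vertices and completing the proof.

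I expect the main obstacle to be the upper bound. The lower bound and the square are essentially formal consequences of whiskering the two points of $J_1$, and one might hope that $V(J_1\otimes J_1)\cong V(J_1)\times V(J_1)$ follows abstractly; but the vertices of a \emph{tensor} are not accessible through the tensor--hom adjunction, and the canonical comparison $V(G)\times V(H)\to V(G\otimes H)$ is only a split monomorphism a priori. The essential insight is that biclosedness (hence preservation of epimorphisms by $-\otimes J_1$), combined with the fact that $\DiGraph$-morphisms are determined on vertices, is exactly what rules out extra vertices; this is the step that must be got right.
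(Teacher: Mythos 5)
Your proof is correct and follows essentially the same route as the paper's: your map $h=(\iota_0\sqcup\iota_1)\otimes\mathrm{id}_{J_1}$ is exactly the paper's epimorphism $f\colon J_1\sqcup J_1\to J_1\otimes J_1$ (the tensor of $e$ with the identity), your projections $p_1,p_2$ are the paper's $\ell_v,\ell_h$, and your whiskered inclusions are the paper's $fp_h,fq_h,fp_v,fq_v$ picking out the four edges of $C_{Sq}$. The only difference is presentational: the paper organizes these maps as the tensor square of the codigraph obtained by applying the Yoneda embedding to $\mathbb{D}^{\mathrm{op}}$, whereas you construct them directly and make explicit (via the codiscrete right adjoint to the vertex functor) that epimorphisms in $\DiGraph$ are vertex-surjective, a point the paper leaves implicit.
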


\begin{proof}
By applying the Yoneda embedding to $\mathbb{D}^{op}$ we get a codigraph in $\DiGraph$: 
\[\begin{tikzcd}
	{J_0} && {J_0 \sqcup J_0} && {J_1}
	\arrow["p"{description}, shift left=3, from=1-1, to=1-3]
	\arrow["q"{description}, shift right=3, from=1-1, to=1-3]
	\arrow["i"{description}, from=1-3, to=1-1]
	\arrow["e", two heads, from=1-3, to=1-5]
	\arrow["\ell", shift left=4, bend left, two heads, from=1-5, to=1-1]
\end{tikzcd}\]
It follows that $p$ and $q$ send the unique vertex of $J_0$ to distinct vertices in $J_0 \sqcup J_0$ and that $e$ acts as identity on vertices.

By assumption, the functor $\otimes$ preserves colimits, and thus codigraph objects.
Taking the product $\otimes$ of the above digraph with itself gives the following $3 \times 3$ serially commuting square of digraphs where each row and column is a codigraph:
\[\begin{tikzcd}
	& {- \otimes J_0:} && {- \otimes (J_0 \sqcup J_0):} && {- \otimes J_1:} \\
	{J_0 \otimes -:} & {J_0 \otimes J_0} && {J_0 \otimes (J_0 \sqcup J_0)} && {J_0 \otimes J_1} \\
	\\
	{(J_0 \sqcup J_0 )\otimes -:} & {(J_0 \sqcup J_0 ) \otimes (J_0)} && {(J_0 \sqcup J_0 ) \otimes (J_0 \sqcup J_0)} && {(J_0 \sqcup J_0 ) \otimes J_1} \\
	\\
	{J_1 \otimes -:} & {J_1 \otimes J_0} && {J_1 \otimes (J_0 \sqcup J_0 )} && {J_1 \otimes J_1}
	\arrow[shift left=3, from=2-2, to=2-4]
	\arrow[shift right=3, from=2-2, to=2-4]
	\arrow[shift left=3, from=2-2, to=4-2]
	\arrow[shift right=3, from=2-2, to=4-2]
	\arrow[from=2-4, to=2-2]
	\arrow[two heads, from=2-4, to=2-6]
	\arrow[shift left=3, from=2-4, to=4-4]
	\arrow[shift right=3, from=2-4, to=4-4]
	\arrow[shift left=3, from=2-6, to=4-6]
	\arrow[shift right=3, from=2-6, to=4-6]
	\arrow[from=4-2, to=2-2]
	\arrow[shift left=3, from=4-2, to=4-4]
	\arrow[shift right=3, from=4-2, to=4-4]
	\arrow[two heads, from=4-2, to=6-2]
	\arrow[from=4-4, to=2-4]
	\arrow[from=4-4, to=4-2]
	\arrow[two heads, from=4-4, to=4-6]
	\arrow[two heads, from=4-4, to=6-4]
	\arrow[from=4-6, to=2-6]
	\arrow[two heads, from=4-6, to=6-6]
	\arrow[shift left=3, from=6-2, to=6-4]
	\arrow[shift right=3, from=6-2, to=6-4]
	\arrow[from=6-4, to=6-2]
	\arrow[two heads, from=6-4, to=6-6]
\end{tikzcd}\]
which simplifies to:
\begin{equation}
\begin{tikzcd}
	{J_0} && {J_0 \sqcup J_0} && {J_1} \\
	\\
	{J_0 \sqcup J_0} && {J_0 \sqcup J_0 \sqcup J_0 \sqcup J_0} && {J_1 \sqcup J_1} \\
	\\
	{J_1 } && {J_1 \sqcup J_1} && {J_1 \otimes J_1}
	\arrow[shift left=3, from=1-1, to=1-3]
	\arrow[shift right=3, from=1-1, to=1-3]
	\arrow[shift left=3, from=1-1, to=3-1]
	\arrow[shift right=3, from=1-1, to=3-1]
	\arrow[from=1-3, to=1-1]
	\arrow[two heads, from=1-3, to=1-5]
	\arrow[shift left=3, from=1-3, to=3-3]
	\arrow[shift right=3, from=1-3, to=3-3]
	\arrow[shift left=3, from=1-5, to=3-5]
	\arrow[shift right=3, from=1-5, to=3-5]
	\arrow[from=3-1, to=1-1]
	\arrow[shift left=3, from=3-1, to=3-3]
	\arrow[shift right=3, from=3-1, to=3-3]
	\arrow[two heads, from=3-1, to=5-1]
	\arrow[from=3-3, to=1-3]
	\arrow[from=3-3, to=3-1]
	\arrow[two heads, from=3-3, to=3-5]
	\arrow[two heads, from=3-3, to=5-3]
	\arrow[from=3-5, to=1-5]
	\arrow[two heads, from=3-5, to=5-5]
	\arrow[shift left=3, from=5-1, to=5-3]
	\arrow[shift right=3, from=5-1, to=5-3]
	\arrow[from=5-3, to=5-1]
	\arrow[two heads, from=5-3, to=5-5]
\end{tikzcd}
    \label{diagram}
\end{equation}
(Note that we have omitted the epimorphisms into the top row and left column for clarity).
We now determine $J_1 \otimes J_1$.
Labeling the vertices $a,b,c,d$, the action of the maps on vertices is as follows:
\tikzset{
 ab/.pic = {
 \node at (0,0) {$a$} ;
 \node at (1,0) {$b$} ;
 \draw[->] (0.15,0) -- (0.85,0) ;
 }
}

\tikzset{
 ab_boxed/.pic = {
 \pic at (0,0) {ab} ;
 \draw (-.25, -.25) -- (1.25, -.25) -- (1.25, .25) -- (-.25, .25) -- (-.25, -.25);
 }
}

\tikzset{
 ab_none/.pic = {
 \node at (0,0) {$a$} ;
 \node at (1,0) {$b$} ;
 }
}

\tikzset{
 ab_none_boxed/.pic = {
 \pic at (0,0) {ab_none} ;
 \draw (-.25, -.25) -- (1.25, -.25) -- (1.25, .25) -- (-.25, .25) -- (-.25, -.25);
 }
}

\tikzset{
 ac/.pic = {
 \node at (0,0) {$a$} ;
 \node at (0,-1) {$c$} ;
 \draw[->] (0,-.15) -- (0,-.85) ;
 }
}

\tikzset{
 ac_boxed/.pic = {
 \pic at (0,0) {ac} ;
 \draw (-.25, .25) -- (.25, .25) -- (.25, -1.25) -- (-.25, -1.25) -- (-.25, .25);
 }
}

\tikzset{
 ac_none/.pic = {
 \node at (0,0) {$a$} ;
 \node at (0,-1) {$c$} ;
 }
}

\tikzset{
 ac_none_boxed/.pic = {
 \pic at (0,0) {ac_none} ;
 \draw (-.25, .25) -- (.25, .25) -- (.25, -1.25) -- (-.25, -1.25) -- (-.25, .25);
 }
}

\tikzset{
 a_none/.pic = {
 \node at (0,0) {$a$};
 }
}

\tikzset{
 a_none_boxed/.pic = {
 \pic at (0,0) {a_none} ;
 \draw (-0.25,-0.25) -- (-0.25,0.25) -- (0.25,0.25) -- (0.25,-0.25) -- (-0.25,-0.25);
 }
}

\tikzset{
 abcd_none/.pic = {
 \node at (0,0) {$a$};
 \node at (1,0) {$b$};
 \node at (0,-1) {$c$};
 \node at (1,-1) {$d$};
 }
}

\tikzset{
 abcd_none_boxed/.pic = {
 \pic at (0,0) {abcd_none};
 \draw (-.25,.25) -- (-.25, -1.25) -- (1.25, -1.25) -- (1.25, .25) -- (-.25,.25);
 }
}

\tikzset{
 abcd_ver/.pic = {
 \pic at (0,0) {abcd_none};
 \draw[->] (0, -.15) -- (0, -0.85) ;
 \draw[->] (1, -.15) -- (1, -0.85) ;
 }
}

\tikzset{
 abcd_ver_boxed/.pic = {
 \pic at (0,0) {abcd_ver} ;
 \draw (-.25,.25) -- (-.25, -1.25) -- (1.25, -1.25) -- (1.25, .25) -- (-.25,.25);
 }
}

\tikzset{
 abcd_hor/.pic = {
 \pic at (0,0) {abcd_none};
 \draw[->] (0.15, 0) -- (0.85, 0) ;
 \draw[->] (.15, -1) -- (.85, -1) ;
 }
}

\tikzset{
 abcd_hor_boxed/.pic = {
 \pic at (0,0) {abcd_hor} ;
 \draw (-.25,.25) -- (-.25, -1.25) -- (1.25, -1.25) -- (1.25, .25) -- (-.25,.25);
 }
}

\tikzset{
 abcd_full/.pic = {
 \pic at (0,0) {abcd_none};
\draw[->] (0.15, 0) -- (0.85, 0) ;
 \draw[->] (.15, -1) -- (.85, -1) ;
 \draw[->] (0, -.15) -- (0, -0.85) ;
 \draw[->] (1, -.15) -- (1, -0.85) ;
 }
}

\tikzset{
 abcd_full_boxed/.pic = {
 \pic at (0,0) {abcd_full} ;
 \draw (-.25,.25) -- (-.25, -1.25) -- (1.25, -1.25) -- (1.25, .25) -- (-.25,.25);
 }
}

\begin{center}
\begin{tikzpicture}

    \pic at (0,0) {a_none_boxed} ;
    \pic at (3,0) {ab_none_boxed} ;
    \pic at (6,0) {ab_boxed} ;

    \draw[->] (2.5,0) -- (.5,0);
    \draw[->] (.5,.25) -- (2.5,.25);
    \draw[->] (.5,-.25) -- (2.5,-.25);
    \draw[->>] (4.5,0) -- (5.5,0) ;

    \pic at (0,-1.5) {ac_none_boxed} ;
    \pic at (3,-1.5) {abcd_none_boxed} ;
    \pic at (6,-1.5) {abcd_hor_boxed} ;

    \draw[->] (2.5,-2) -- (.5,-2);
    \draw[->] (.5,-1.75) -- (2.5,-1.75);
    \draw[->] (.5,-2.25) -- (2.5,-2.25);
    \draw[->>] (4.5,-2) -- (5.5,-2) ;

    \pic at (0,-4) {ac_none_boxed} ;
    \pic at (3,-4) {abcd_ver_boxed} ;
    \node at (6.5,-4.5) {$J_1 \otimes J_1$} ;

    \draw[->] (2.5,-4.5) -- (.5,-4.5);
    \draw[->] (.5,-4.25) -- (2.5,-4.25);
    \draw[->] (.5,-4.75) -- (2.5,-4.75);
    \draw[->>] (4.5,-4.5) -- (5.5,-4.5) ;

    \draw[->] (0,-1) -- (0,-.5);
    \draw[->] (.25,-.5) -- (.25,-1);
    \draw[->] (-.25,-.5) -- (-0.25,-1);
    \draw[->>] (0,-3) -- (0,-3.5) ;

    \draw[->] (3.5,-1) -- (3.5,-.5);
    \draw[->] (3.75,-.5) -- (3.75,-1);
    \draw[->] (3.25,-.5) -- (3.25,-1);
    \draw[->>] (3.5,-3) -- (3.5,-3.5) ;

    \draw[->] (6.5,-1) -- (6.5,-.5);
    \draw[->] (6.75,-.5) -- (6.75,-1);
    \draw[->] (6.25,-.5) -- (6.25,-1);
    \draw[->>] (6.5,-3) -- (6.5,-4) ;

    \node at (1.5,-4) {$p_h$} ;
    \node at (1.5,-5) {$q_h$} ;
    \node at (5,-4.25) {$f$} ;
    
    \node at (6,-.75) {$p_v$} ;
    \node at (7,-.75) {$q_v$} ;
    \node at (6.25,-3.5) {$f$} ;

    \node at (3.25, -3.15) {id} ;
    \node at (5, -1.75) {id} ;

    \draw[->] (6.45,-5) arc (-60:-118:6);
    \draw[->] (7.25,-4.25) arc (330:388:4);

    \node at (3.5,-6.2) {$\ell _h$} ;
    \node at (8.15,-2.5) {$\ell _v$} ;
    
\end{tikzpicture}
\end{center}

where the maps $p_h,q_h, p_v, q_v$ are as follows:
\begin{center}
    \begin{tabular}{|c|c|}
    \hline
    $p_h: \begin{cases}
        a \mapsto a \\
        c \mapsto c
    \end{cases}$ 
    &
    $q_h: \begin{cases}
        a \mapsto b \\
        c \mapsto d
    \end{cases}$ \\
    \hline
    $p_v: \begin{cases}
        a \mapsto a \\
        b \mapsto b
    \end{cases}$ 
    &
    $q_v: \begin{cases}
        a \mapsto c \\
        b \mapsto d
    \end{cases}$ \\
    \hline
    \end{tabular}
\end{center}
We have that $\ell_\bullet fp_\bullet = \text{id} = \ell_\bullet f q_\bullet$.
Therefore:
\begin{align}
    \ell_h f(a) &= \ell_h f(b) = a \\
    \ell_h f(c) &= \ell_h f(d) = c \\ 
    \ell_v f(a) &= \ell_v f(c) = a \\
    \ell_v f(b) &= \ell_v f(d) = b
\end{align}
Because we we have an epimorphism $J_1 \sqcup J_1 \rightarrow J_1 \otimes J_1$, we know that $J_1 \otimes J_1$ has at most 4 vertices. The equations (2.2) to (2.5) show that the vertices $f(a), f(b), f(c), f(d)$ are distinct (for example, $f(a)$ is distinct from the other vertices since (2.2) and (2.3) show that $f(a) \neq f(c), f(a) \neq f(d)$ and (2.4) and (2.5) show that $f(a) \neq f(b)$). Then, $J_1 \otimes J_1$ has at least 4 vertices, so it has exactly 4 vertices.
\end{proof}

With the preliminaries now in place, we can prove the main theorem of this section.

\begin{theorem}
    The only biclosed monoidal structures on $\DiGraph$ are $\mathbin{\square}$ and $\boxtimes$.
\end{theorem}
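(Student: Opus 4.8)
The plan is to use the three preceding lemmas to reduce the problem to a finite combinatorial check. By \cref{lem:unit,lem:determined-by-square}, any biclosed monoidal structure on $\DiGraph$ has unit $J_0$ and is completely determined by the single digraph $J_1 \otimes J_1$, and by \cref{lem:J1-J1-four-vertices} this digraph has exactly four vertices and contains $C_{Sq}$. So it suffices to determine which edges $J_1 \otimes J_1$ can carry: I will show that beyond the four edges of $C_{Sq}$ at most one further edge (a single diagonal) can occur, so that $J_1 \otimes J_1$ is forced to be one of exactly two digraphs, which I then identify with $J_1 \mathbin{\square} J_1$ and $J_1 \boxtimes J_1$. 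Since both $\mathbin{\square}$ and $\boxtimes$ are already known to be biclosed monoidal structures, \cref{lem:determined-by-square} then yields the theorem.

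The key device is a pair of projection homomorphisms extracted from the unit. Writing $!\colon J_1 \to J_0$ for the unique map to the terminal unit and $\delta_0, \delta_1 \colon J_0 \to J_1$ for its two vertices, I would set
\[
\pi_1 = \varrho \circ (\id[J_1] \otimes\, !) \colon J_1 \otimes J_1 \to J_1 \otimes J_0 \to J_1,
\qquad
\pi_2 = \lambda \circ (!\, \otimes\, \id[J_1]) \colon J_1 \otimes J_1 \to J_0 \otimes J_1 \to J_1 .
\]
Labelling the four vertices of $J_1 \otimes J_1$ as $v_{ij}$, the image of $\delta_i \otimes \delta_j$, I would check, via bifunctoriality of $\otimes$, the identity $! \circ \delta_k = \id[J_0]$, and naturality of the unitors, that $\pi_1(v_{ij}) = i$ and $\pi_2(v_{ij}) = j$. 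The same style of argument applied to $\id[J_1] \otimes \delta_j$ and $\delta_i \otimes \id[J_1]$ recovers the four edges of $C_{Sq}$ as the images of the unique edge $0 \to 1$ of $J_1$ under the row and column inclusions, so that with $a = v_{00}$, $b = v_{01}$, $c = v_{10}$, $d = v_{11}$ these edges are exactly $a \to b$, $a \to c$, $b \to d$, $c \to d$.

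Next I would rule out every remaining edge. Because $\pi_1$ and $\pi_2$ are digraph homomorphisms and $0 \to 1$ is the only non-loop edge of $J_1$, any edge $v_{ij} \to v_{kl}$ of $J_1 \otimes J_1$ must satisfy both $i \le k$ and $j \le l$ (reading $0 < 1$). Enumerating the comparable pairs of distinct vertices in this product order leaves precisely five candidate edges: the four of $C_{Sq}$, which are forced, together with the single diagonal $a \to d = v_{00} \to v_{11}$, which is optional. Hence $J_1 \otimes J_1$ is either $C_{Sq}$ or $C_{Sq}$ with the diagonal adjoined; reading off the definitions, these are exactly $J_1 \mathbin{\square} J_1$ and $J_1 \boxtimes J_1$, and the theorem follows by \cref{lem:determined-by-square}.

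The step I expect to be the main obstacle is the bookkeeping in the middle paragraph: I must ensure that the vertex labelling induced by the two projections is compatible with the copy of $C_{Sq}$ furnished by \cref{lem:J1-J1-four-vertices}, so that the four forced edges and the lone admissible diagonal genuinely line up with the coordinates $(i,j)$. This is a careful chase through the unitors and the bifunctoriality of $\otimes$; once the alignment is fixed, the edge count in the final paragraph is immediate, and no combinatorially involved case analysis is needed.
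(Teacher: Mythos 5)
Your proof is correct and takes essentially the same route as the paper: your projections $\pi_1, \pi_2$ are exactly the maps $\ell_h, \ell_v$ that the paper extracts from the tensored codigraph diagram in \cref{lem:J1-J1-four-vertices}, and the paper uses them in the same way to rule out every edge of $J_1 \otimes J_1$ except the optional diagonal $a \rightsquigarrow d$, leaving only $\mathbin{\square}$ and $\boxtimes$. The only cosmetic difference is that you rebuild these projections directly from the unitors and the terminal map (and re-derive the aligned copy of $C_{Sq}$) rather than reusing the notation from the proof of \cref{lem:J1-J1-four-vertices}.
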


\begin{proof}
Continuing with the notation of \cref{lem:J1-J1-four-vertices}, the maps $fp_h, fq_h, fp_v, fq_v \colon J_1 \rightarrow J_1 \otimes J_1$ all pick out a different edge, so that 
\[\begin{tikzcd}
	a && b \\
	\\
	c && d
	\arrow[from=1-1, to=1-3]
	\arrow[from=1-1, to=3-1]
	\arrow[from=1-3, to=3-3]
	\arrow[from=3-1, to=3-3]
\end{tikzcd}\]
is a subgraph of $J_1 \otimes J_1$. 

If there was another edge in the opposite direction of a known edge, say $b \rightsquigarrow a$ in $J_1 \otimes J_1$, then $\ell_v$ implies $b \rightsquigarrow a$ in $J_1$. However, there is no such edge so $b \not \rightsquigarrow a$ in $J_1 \otimes J_1$. Similar reasoning yields that there are no edges in opposite directions of the known edges, $d \not \rightsquigarrow a$, $b \not \rightsquigarrow c$ and $c \not \rightsquigarrow b$. The only case left to determine is whether there is an edge $a \rightsquigarrow d$. If there is no such edge, then the monoidal structure must be the box product. If there is an edge, then the monoidal structure must be the categorical product. Therefore, these are the only possible such structures.
\end{proof}

\section{Undirected graphs}

The proof for undirected reflexive graph is almost identical, so we only discuss the necessary changes that need to be made to the arguments of the previous section.
Let $C_4$ denote the undirected cyclic graph on 4 vertices:
\[\begin{tikzcd}
	\bullet && \bullet \\
	\\
	\bullet && \bullet
	\arrow[no head, from=1-1, to=1-3]
	\arrow[no head, from=1-1, to=3-1]
	\arrow[no head, from=1-3, to=3-3]
	\arrow[no head, from=3-1, to=3-3]
\end{tikzcd}\]

\begin{theorem}
    The only biclosed monoidal structures on $\Graph$ are $\mathbin{\square}$ and $\boxtimes$.
\end{theorem}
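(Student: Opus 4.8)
The plan is to follow the proof of the directed case almost verbatim, making exactly two adjustments. First I would establish the undirected analogue of \cref{lem:J1-J1-four-vertices}: that $I_1 \otimes I_1$ has exactly four vertices and contains $C_4$ as a subgraph. Applying the Yoneda embedding to $\mathbb{G}^{\op}$ produces a co-graph object $I_0 \rightrightarrows I_0 \sqcup I_0 \to I_1$ in $\Graph$ (now carrying additionally the reflection of the generator $s$), and tensoring it with itself yields the same $3 \times 3$ serially commuting diagram as in \eqref{diagram}. The retraction argument via $\ell_h$ and $\ell_v$ then carries over word for word, since the extra structure map $s$ plays no role in counting vertices: it shows that the four images $f(a), f(b), f(c), f(d)$ are distinct and that the edges selected by $fp_h, fq_h, fp_v, fq_v$ assemble into the $4$-cycle $C_4$ with edges $a$--$b$, $a$--$c$, $b$--$d$, and $c$--$d$.

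Next I would analyze the remaining edges. Because $\Graph$ consists of undirected graphs, every edge is symmetric, so the orientation bookkeeping of the directed proof (ruling out $b \rightsquigarrow a$, $d \rightsquigarrow a$, and so on) becomes vacuous. The only pairs of vertices whose adjacency is not yet determined are the two diagonals $a$--$d$ and $b$--$c$, so the monoidal structure is pinned down by which subset of $\{a\text{--}d,\, b\text{--}c\}$ consists of edges.

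The one genuinely new ingredient is a symmetry argument forcing the two diagonals to appear together. Unlike the directed edge $J_1$, the undirected edge $I_1$ admits the automorphism $\nu \colon I_1 \to I_1$ interchanging its two vertices --- this is precisely the reflection of the extra generator $s$ of $\mathbb{G}$, and its nonexistence for $J_1$ is exactly what distinguishes the two cases. Functoriality of $\otimes$ then supplies an automorphism $\nu \otimes \id[I_1]$ of $I_1 \otimes I_1$. Composing with the retractions $\ell_h$ and $\ell_v$, which jointly separate the four vertices, one checks that $\nu \otimes \id[I_1]$ fixes the $\ell_h$-fibers while interchanging the $\ell_v$-fibers, and hence acts as $a \leftrightarrow b$, $c \leftrightarrow d$. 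In particular it carries the diagonal $a$--$d$ to $b$--$c$, and since graph automorphisms preserve edges, $a$--$d$ is an edge \iff $b$--$c$ is. This rules out the two ``mixed'' configurations.

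Two possibilities then survive: if neither diagonal is an edge the structure is the box product $\mathbin{\square}$, while if both are it is the categorical product $\boxtimes$. I expect the main obstacle to be the verification that $\nu \otimes \id[I_1]$ swaps the diagonals --- that is, pinning down its action on the four vertices through $\ell_h$ and $\ell_v$ --- since this is the single place where the symmetric structure of $\Graph$ is used in an essential way; one should also confirm that the first step truly goes through unchanged once the symmetry $s$ is present in $\mathbb{G}$.
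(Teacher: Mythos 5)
Your proposal is correct and takes essentially the same route as the paper: your automorphism $\nu \otimes \id[I_1]$ is precisely the endomorphism $s_h$ that the paper obtains from the tensored cograph (the image of the extra generator $s$ of $\mathbb{G}$), and both arguments use it to force the two diagonals of $I_1 \otimes I_1$ to be present or absent together, after running the four-vertex/$C_4$ argument unchanged from the directed case.
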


\begin{proof}
We proceed as in $\S 3$. 
Suppose that $(\Graph, \otimes, I_0)$ is a biclosed monoidal structure. 
By applying the Yoneda embedding to $\mathbb{G}$, we obtain a cograph in $\Graph$:
\[\begin{tikzcd}
	{I_0} && {I_0 \sqcup I_0} && {I_1}
	\arrow["p"{description}, shift left=3, from=1-1, to=1-3]
	\arrow["q"{description}, shift right=3, from=1-1, to=1-3]
	\arrow["i"{description}, from=1-3, to=1-1]
	\arrow["e", two heads, from=1-3, to=1-5]
	\arrow["\ell", shift left=4, bend left, two heads, from=1-5, to=1-1]
    \arrow["s", from=1-5, to=1-5, loop, in=55, out=125, distance=10mm]
\end{tikzcd}\]
Cographs are preserved by $G \otimes -$ and $- \otimes G$, so we obtain a double cograph as in \cref{diagram}. As before $I_1 \otimes I_1$ has exactly 4 vertices and $C_4$ is a subgraph of $I_1 \otimes I_1$; however, now there are also two endomorphisms $s_h, s_v$ of $I_1 \otimes I_1$ that act on vertices as follows:
    \begin{align*}
        s_h : \begin{cases}
        a \mapsto b \mapsto a \\
        c \mapsto d \mapsto c
        \end{cases}
         & & s_v : \begin{cases}
         a \mapsto c \mapsto a \\
         b \mapsto d \mapsto b
         \end{cases}
    \end{align*}
Then, the only thing left to determine is if there are any diagonal edges. If there are no diagonal edges, then the monoidal structure is the box product. If there was one diagonal edge, say $a \sim d$, then by applying the endomorphism $s_h$ we also have $b \sim c$. Therefore, the only possibility is if there are both diagonal edges. If there are both diagonal edges, then the monoidal structure is the categorical product.
Thus the box product and the categorical product are the only biclosed monoidal structures on the category of undirected reflexive graphs.
\end{proof}

\section{Graphs with optional loops}

One can ask whether these arguments carry over to the case of graphs that are not necessarily reflexive.
Unfortunately, to our knowledge, new arguments are required to address the problem of the classification of biclosed monoidal structures on that category.
Before reviewing the relevant issues, let us formulate our conjecture:

\begin{conjecture}
 The category of (directed or undirected) graphs with optional loops carries exactly three biclosed monoidal structures: the categorical product, the box product, and the strong product.
\end{conjecture}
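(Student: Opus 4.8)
The plan is to mirror the development of \cref{lem:unit,lem:determined-by-square} and the two main theorems, replacing $\DiGraph$ by the category $\DiGraph_\ell$ of directed graphs with optional loops (and $\Graph$ by its undirected analogue $\Graph_\ell$). First I would present $\DiGraph_\ell$ as the finite-limit-preserving functors from the sketch $\mathbb{D}'$ obtained from $\mathbb{D}$ by deleting the generator $\ell \colon V \to E$ and the reflexivity relation $\Delta = e\ell$ (for the undirected case one likewise deletes $s\ell = \ell$ from $\mathbb{G}$). The surviving data---$e$ monic, $p,q$ exhibiting $V^2 = V \times V$, and the involution $\sigma$---still make $\DiGraph_\ell$ a reflective subcategory of $\Set^{\mathbb{D}'}$, so the density argument of \cref{lem:determined-by-square} carries over and the monoidal product is again determined by its values on the reflected representables $Ly(V)$, $Ly(V^2)$, $Ly(E)$. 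Writing $P := Ly(V)$ for the single loopless vertex, $J_1 := Ly(E)$ for the single edge, and noting $Ly(V^2) = P \sqcup P$, the entire structure is pinned down by the finite data $P \otimes P$, $P \otimes J_1$, and $J_1 \otimes J_1$ together with the unit.

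The crux of the whole problem is that four objects which coincide in the reflexive setting now split apart. In $\DiGraph$ the unit, the terminal object, the representable $Ly(V)$, and the target of the fold map $\ell \colon J_1 \to Ly(V)$ are all equal to $J_0$; in $\DiGraph_\ell$ they separate into the loopless point $P = Ly(V)$ and the looped point $P^\circ$ (one vertex with a loop), which is now both the terminal object and the target of the only fold $J_1 \to P^\circ$. The first consequence is that the unit is no longer pinned down by \cref{lem:unit}: $\DiGraph_\ell$ fails to be well-pointed, since a map out of the terminal object $P^\circ$ lands only on a looped vertex and so cannot detect loopless vertices. A replacement argument is needed; I would exploit closedness, using that $I \otimes -$ is a colimit-preserving endofunctor naturally isomorphic to the identity and hence satisfies $I \otimes P \cong P$ and $I \otimes J_1 \cong J_1$, and combine this with a direct inspection of small graphs to cut the candidate units down to exactly $P$ and $P^\circ$---the units, respectively, of the box and strong products and of the categorical (tensor) product.

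With the unit so restricted, I would rerun the double-cograph of \eqref{diagram} to constrain $J_1 \otimes J_1$. The second difficulty surfaces here: the folding maps $\ell_h, \ell_v$ that powered \cref{lem:J1-J1-four-vertices} required the fold to target the unit, and this now holds only in the case $I = P^\circ$, since $P^\circ \otimes J_1 \cong J_1$ there whereas $P^\circ \otimes J_1$ acquires spurious loops when $I = P$. In the case $I = P$ the reduction to $J_1 \otimes J_1$ is nonetheless clean (the unit is $Ly(V)$), so one only has to re-derive, without the fold maps, that $J_1 \otimes J_1$ has four loopless vertices containing $C_{Sq}$ and no reversed edges; the sole remaining freedom, the diagonal edge, should then separate the box product from the strong product exactly as in the reflexive theorem.

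The genuine obstacle, and the reason for stating the result as a conjecture, is the case $I = P^\circ$. Here the unit is not a representable but the coequalizer of the two vertex inclusions $P \rightrightarrows J_1$, so the principle that the structure is determined by $J_1 \otimes J_1$ breaks down: applying $- \otimes G$ to that coequalizer forces $P^\circ \otimes G$ to be a coequalizer of $P \otimes G \rightrightarrows J_1 \otimes G$, and one must simultaneously determine $P \otimes P$ and $P \otimes J_1$ to make the unit isomorphism $P^\circ \otimes G \cong G$ hold. Unlike the reflexive proof, where associativity never entered, one must now use the associativity and unit coherences as active constraints to eliminate spurious products and force precisely the categorical product. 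I expect this coherence-driven elimination, together with recovering a unit-determination argument in the absence of well-pointedness, to be the hard part; verifying conversely that the box, strong, and categorical products are pairwise non-isomorphic and each genuinely biclosed is routine and establishes only the existence half of the count.
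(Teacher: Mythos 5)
The statement you set out to prove is stated in the paper only as a \emph{conjecture}: the paper offers no proof of it, and its final section records exactly the partial progress you describe (the unit must be a looped or an unlooped vertex, density reduces everything to finitely many products of representables, and in the unlooped-unit case one can show $J_1 \otimes J_1$ has four vertices and at least four edges) before concluding that new tools are required. Your proposal reproduces essentially this same outline and does not go beyond it. At every point where the method of \cref{lem:J1-J1-four-vertices} genuinely breaks, your text substitutes an expectation for an argument (``the sole remaining freedom \ldots{} \emph{should} then separate the box product from the strong product,'' ``\emph{I expect} this coherence-driven elimination \ldots{} to be the hard part''). What you have written is a research plan, not a proof, and the plan's unexecuted steps are precisely the open parts of the problem.

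The concrete gap is this. In the case where the unit is the loopless point $P$, you claim one ``only has to re-derive, without the fold maps,'' that $J_1 \otimes J_1$ has four loopless vertices, contains $C_{Sq}$, and has no reversed edges. But the fold maps $\ell_h, \ell_v$ are not an incidental convenience in the reflexive proofs: they are the mechanism by which \cref{lem:J1-J1-four-vertices} forces $f(a), f(b), f(c), f(d)$ to be distinct (via $\ell_\bullet f p_\bullet = \mathrm{id} = \ell_\bullet f q_\bullet$), and the mechanism by which the theorem excludes reversed edges (``$\ell_v$ implies $b \rightsquigarrow a$ in $J_1$''). In the optional-loops setting there is \emph{no} homomorphism at all from the edge to the loopless vertex --- the edge would have to land on a loop that does not exist --- so both steps lack any substitute; this is exactly the obstruction the paper names when it says there is no analogue of $\ell$ in the new indexing category. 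For the looped-vertex unit, your suggestion to use the associativity and unit coherences ``as active constraints'' is likewise not accompanied by any deduction, and it is not known that this suffices. (A smaller inaccuracy: the paper asserts that the unit classification still follows from the cited Prop.~1.7, much as in \cref{lem:unit}, so the difficulty you locate in the failure of well-pointedness is not where the real problem lies.)
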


Many of the arguments used above apply to this case, though often with additional complications and limitations:

\textbf{Units.}
Any biclosed monoidal structure on the category of graphs with optional loops necessarily has either a looped or an unlooped vertex as its unit.
The proof is similar to our \cref{lem:unit}, and follows from \cite[Prop.~1.7]{kapulkin-kershaw:monoidal-graphs}.
In fact, each of these is a unit of at least one monoidal structure mentioned in our conjecture.

\textbf{Reduction to representables.}
The monoidal structure is completely determined by the products of the graph with a single unlooped vertex and the graph with two vertices and a single edge between them.
This is again a consequence of the density theorem (cf.~\cref{lem:determined-by-square}), but since the graph with a single unlooped vertex is not guaranteed to be the monoidal unit, we need to consider all four cases.

\textbf{Special cases.}
In the case when the unlooped vertex is the monoidal unit, one can actually show that the product of the graph consisting of a single edge with itself has four vertices and (at least) four edges.

The key difficulty lies in the fact that there is no analogue of the morphism $\ell$ in our new indexing category, which plays a key role in the final step of the proof that the product of an edge with itself has 4 vertices (\cref{lem:J1-J1-four-vertices}).
Consequently, new tools are required to address the above conjecture.

 \bibliographystyle{amsalphaurlmod}
 \bibliography{all-refs.bib}

\end{document}